\theoremstyle{plain}
\newtheorem*{lemma}{Lemma}
\newtheorem*{remark}{Remark}
\newcommand{\R}{\mathbb{R}}
\newcommand{\Z}{\mathbb{Z}}
\newcommand{\C}{\mathbb{C}}
\newcommand{\iso}{\cong}
\newcommand{\K}{\mathbb{K}}
\begin{document}
\title[Distinct symplectic structures]{Simple examples of distinct\\ Liouville type symplectic structures}
\date{revised version; December 13, 2010}
\author{Paul Seidel}
\maketitle

A symplectic form $\omega$ on an open manifold $M$ is said to be of Liouville type if the following holds. One can write $\omega = d\theta$, and the dual Liouville vector field $Z$, defined by $i_Z\omega = \theta$, can be integrated for all times. Moreover, $M$ should admit an exhausting function $h$ such that $dh(Z) > 0$ outside a compact subset. (Strictly speaking, this should be called a complete finite type Liouville structure, but we omit the adjectives for the sake of brevity.) Outside a compact subset, any such $M$ is the symplectization of a closed contact manifold, called the boundary at infinity.

It is by now well-known that a given differentiable manifold can support several such symplectic structures which are distinct, but not distinguished by classical homotopical invariants. For constructions in dimensions greater than four, including the case of Euclidean space, see \cite{seidel-smith04b, mclean09, maydanskiy09b, maydanskiy-seidel09, bourgeois-ekholm-eliashberg09, abouzaid-seidel10}. Four-dimensional instances can be obtained by attaching handles along Chekanov's examples of distinct Legendrian knots, as proved in \cite{bourgeois-ekholm-eliashberg09}. The aim of this note is to provide some rather basic examples of the same phenomenon. Of course, the proofs that their symplectic structures are different still rely on general properties of Floer cohomology, hence can't be considered elementary, but the computations involved are at least conceptually simple. The originality of the examples is somewhat limited. Those in Section \ref{sec:4} are closely related to a special case of Honda's classification of contact structures on circle bundles \cite{honda00}. Those in Section \ref{sec:6} are slight modifications of a construction from \cite{mcduff91b}. Finally, those in Section \ref{sec:8} were inspired by \cite{mclean09,seidel-smith04b}. I still hope that a concise exposition may be useful.

This work was partially supported by NSF grant DMS-1005288. I am indebted to Mohammed Abouzaid, Mark McLean, and Ivan Smith for useful conversations.

\section{A four-dimensional example\label{sec:4}}

Let $S_1$ be a once-punctured oriented surface of genus $g > 0$, and $S_2$ a $2j+1$-punctured oriented surface of genus $g-j$, for some choice of $1 \leq j \leq g$. Choose Liouville type symplectic structures on both $S_k$. Then, equip $M_k = S_k \times \R \times S^1$ with the product of those structures and the standard one on $\R \times S^1 = T^*S^1$.

\begin{lemma}
$M_1$ is diffeomorphic to $M_2$, in a way which is compatible with the homotopy classes of almost complex structures associated to the symplectic forms.
\end{lemma}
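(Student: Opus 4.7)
My plan is to realize both $M_k$ as the interior of a common compact 4-manifold-with-boundary, namely $H_{2g} \times S^1$, where $H_{2g}$ denotes the standard oriented 3-dimensional handlebody of genus $2g$. Write $S_k = \operatorname{int}(\bar S_k)$ with $\bar S_k$ a compact oriented surface-with-boundary, so that $(g_1, b_1) = (g, 1)$ and $(g_2, b_2) = (g - j, 2j + 1)$; in particular both have Euler characteristic $1 - 2g$. Each $\bar S_k$ then admits a handle decomposition with a single $0$-handle and $2g$ one-handles. Multiplying every handle by $[-1,1]$ and smoothing corners yields a three-dimensional handle decomposition of $\bar S_k \times [-1,1]$ with one $0$-handle and $2g$ one-handles, i.e., an oriented handlebody of genus $2g$. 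Since orientable handlebodies are classified up to diffeomorphism by their genus, $\bar S_k \times [-1,1] \cong H_{2g}$ independently of $k$. Crossing with $S^1$ and passing to interiors (using $\R \cong (-1,1)$) produces the desired diffeomorphism $\phi \colon M_1 \to M_2$, which can be arranged to preserve orientation by post-composing with a reflection if necessary.

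For the almost complex structures, observe that $TM_k \cong \pi_{S_k}^* TS_k \oplus \pi_{\R \times S^1}^* T(\R \times S^1)$ is trivial as an oriented real bundle, and that $M_k$ is homotopy equivalent to the 2-dimensional CW-complex $S_k \times S^1$. Hence orientation-compatible almost complex structures are classified up to homotopy by $[M_k, SO(4)/U(2)] = [M_k, S^2] = H^2(M_k; \Z)$, with the bijection implemented by the first Chern class. The product almost complex structure on $M_k$ satisfies $c_1(TM_k) = \pi_{S_k}^* c_1(TS_k) = 0$, since $H^2(S_k; \Z) = 0$ for any open surface. Both natural classes therefore vanish, so any orientation-preserving diffeomorphism automatically respects them.

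The main technical point is the identification $\bar S_k \times [-1,1] \cong H_{2g}$: one has to check that thickening a 2-dimensional handle decomposition really produces the standard 3-dimensional handlebody structure after the corners are smoothed, and invoke the classification of orientable three-dimensional handlebodies by genus. Once that is established, the remaining steps (product with $S^1$, passage to the interior, and the obstruction-theoretic comparison of $c_1$) are entirely routine.
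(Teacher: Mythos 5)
Your proof is correct and follows essentially the same route as the paper: identify $S_1\times\R\cong S_2\times\R$ as interiors of a genus $2g$ handlebody, then compare almost complex structures via triviality of $TM_k$, the identification $SO(4)/U(2)\iso S^2$, and vanishing of $c_1$ on the homotopy $2$-complex $M_k$. The only difference is that you spell out the handle-by-handle thickening argument where the paper simply cites Whitehead for the handlebody identification.
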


\begin{proof}
It is well-known \cite{whitehead} that $S_1 \times \R$ is diffeomorphic to $S_2 \times \R$ (both are interiors of a genus $2g$ handlebody), hence $M_1$ is diffeomorphic to $M_2$. The tangent bundle of $M_k$ is trivial as a real oriented vector bundle. Almost complex structures on that bundle correspond to maps $M_k \rightarrow SO(4)/U(2) \iso S^2$. Since $M_k$ is homotopy equivalent to a $2$-dimensional cell complex, the only obstruction to constructing a nullhomotopy for such a map lies in $H^2(M_k;\pi_2(S^2))$, and is detected by the first Chern class. In our case, both $M_k$ carry almost complex structures with zero first Chern class.
\end{proof}

We denote by $SH^*(\cdot)$ the symplectic cohomology \cite{viterbo97a} of a Liouville type symplectic manifold. Recall that symplectic cohomology is defined as the Floer cohomology of a Hamiltonian with a suitable growth behaviour at infinity. The underlying chain complex has generators coming from the interior topology and boundary dynamics (more specifically, periodic Reeb orbits on the boundary at infinity). The outcome takes the form of a $\Z/2$-graded vector space over some fixed coefficient field $\K$. For simplicity, we will take $\K = \Z/2$ throughout. Also, denote by $SH^*(\cdot)_0$ the direct summand corresponding to nullhomologous loops.

\begin{lemma}
$SH^*(M_1)_0$ is infinite-dimensional over $\K$, while $SH^*(M_2)_0$ is finite-dimensional.
\end{lemma}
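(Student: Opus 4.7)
The strategy is to use a K\"unneth decomposition of $SH^*(M_k)$, which reduces the problem to analyzing $SH^*(S_k)$ split by free homotopy class, and then to compare which of those classes are nullhomologous in each case. First I would invoke the K\"unneth theorem for symplectic cohomology of Liouville manifolds to obtain
$$SH^*(M_k) \iso SH^*(S_k) \otimes SH^*(T^*S^1),$$
compatible with the splittings by free homotopy classes on both sides. A loop in $M_k$ is nullhomologous precisely when each of its factor projections is, so
$$SH^*(M_k)_0 \iso SH^*(S_k)_0 \otimes SH^*(T^*S^1)_0.$$
The second factor is the contractible-loop part of $H_*(LS^1)$, hence isomorphic to $H^*(S^1;\K) \iso \K^2$. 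It therefore suffices to show that $SH^*(S_1)_0$ is infinite-dimensional while $SH^*(S_2)_0$ is finite-dimensional.

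Next I would describe the chain-level generators of $SH^*(S_k)$ for an admissible Hamiltonian. They are of two types: interior critical points, all in the trivial free homotopy class; and Morse--Bott perturbations of iterated Reeb orbits along the boundary circles. Since $\partial_\infty S_k$ is a disjoint union of circles with representatives $c_1,\ldots,c_r \in \pi_1(S_k)$, these latter generators lie in the free homotopy classes $[c_i^n]$ for $n \geq 1$. Because the Floer differential preserves free homotopy classes, $SH^*(S_k)$ decomposes as the direct sum of a contractible summand and of pieces indexed by the iterated boundary loops.

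Now I would compare the two surfaces. For $S_1$ the only boundary loop is $\partial = \prod_{i=1}^g [a_i,b_i]$, which is non-trivial in the free group $\pi_1(S_1)$ but visibly nullhomologous in $H_1(S_1)$; its iterates $\partial^n$ yield infinitely many distinct nullhomologous free homotopy classes. For $S_2$, by contrast, the boundary loops $c_1,\ldots,c_{2j+1}$ satisfy only the single relation $\sum_i [c_i] = 0$ in $H_1(S_2) \iso \Z^{2g}$; in particular each $[c_i]$ is nonzero, so $[c_i^n]$ fails to be nullhomologous for every $n \neq 0$. Consequently $SH^*(S_2)_0$ reduces to the contractible summand, and is bounded in dimension by the finite Morse complex of the interior.

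The main technical obstacle, and the only subtle step, is verifying that for $S_1$ the contribution in each class $[\partial^n]$ actually survives to cohomology rather than being killed by the differential. Since $\partial_\infty S_1$ is a single circle and the Reeb flow is standard rotation, the $n$-fold cover is a Morse--Bott $S^1$-family, and a standard cascade (or Morse--Bott perturbation) computation identifies the piece of $SH^*(S_1)$ in class $[\partial^n]$ with $H^*(S^1;\K) \iso \K^2$; the calculation is self-contained inside that class because the Floer differential preserves it, and in real dimension two there is no room for non-trivial cascades across classes. Summing over $n \geq 1$ produces the required infinite-dimensional space.
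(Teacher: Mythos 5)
Your proposal is correct and follows essentially the same route as the paper: apply Oancea's K\"unneth formula to the nullhomologous-loop summand, observe that the single boundary class of $S_1$ is nullhomologous while no nonzero iterate of a boundary class of $S_2$ is, and conclude from the computation of $SH^*$ of a punctured surface. The only difference is that the paper simply cites the known computation $SH^*(S_k) \iso H^*(S_k;\K) \oplus \bigoplus_{i=1}^\infty H^*(\partial S_k;\K)$ rather than rederiving it by the Morse--Bott argument you sketch (where, incidentally, the point needing care is that the differential \emph{within} each perturbed orbit circle vanishes, not the absence of cross-class cascades, which is automatic).
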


\begin{proof}
$SH^*(S_k) \iso H^*(S_k;\K) \oplus \bigoplus_{i=1}^{\infty} H^*(\partial S_k;\K)$ where, in a slight abuse of notation, $\partial S_k$ is the boundary at infinity \cite[Example 3.3]{seidel07}. By decomposing into direct summands, one finds that $SH^*(S_1)_0 \iso SH^*(S_1)$, whereas $SH^*(S_2)_0 = H^*(S_2;\K)$. Similarly, $SH^*(\R \times S^1)_0 = H^*(\R \times S^1;\K)$. Oancea's K\"unneth formula \cite{oancea04} applies to $SH^*(\cdot)_0$, and completes the argument.
\end{proof}

$SH^*(\cdot)$ and $SH^*(\cdot)_0$ are invariant under symplectomorphisms that are exact with respect to the chosen Liouville one-forms. They are therefore also invariant under general symplectomorphisms $M_1 \iso M_2$, provided that at least one of the two manifolds involved has the following property: every class in $H^1(M_k;\R)$ can be realized as the flux of a symplectic isotopy (compare the discussion in \cite[Section 2]{abouzaid-seidel10}). In our case, it is easy to find the required isotopies on either $M_k$. As a consequence, $M_1$ and $M_2$ are not symplectically isomorphic.

\begin{remark}
Symplectic cohomology has the additional structure of a commutative ring, via the pairs-of-pants product. One can show that the rings $SH^*(M_2)$ are pairwise non-isomorphic for different choices of $j$, so that these also lead to distinct symplectic structures. Here's a brief sketch of the argument. Write $SH^*(\cdot)_{nil}$ for the ideal of nilpotent elements. Then
\[
SH^*(M_2)/SH^*(M_2)_{nil} \iso \K[t_1,\dots,t_{2j+1},s,s^{-1}]/I,
\]
where $I$ is the ideal generated by $t_a t_b$ for any $a \neq b$ (the $t_a$ correspond to loops around the various boundary components of $S_2$). This is the ring of functions on an affine algebraic variety over $\K$ having $2j+1$ irreducible components.
\end{remark}

\section{A six-dimensional example\label{sec:6}}

Let $S$ be a closed oriented surface of genus $\geq 2$, and $C \rightarrow S$ its tangent circle bundle. McDuff \cite{mcduff91b} showed that $T^*S \setminus S \iso C \times \R$ carries a symplectic structure of Liouville type.
%More precisely, McDuff \cite{mcduff91b} observed that $C \times [-1,1]$ has the structure of a symplectic %manifold with contact type boundary, and Geiges \cite{geiges} exhibited an explicit symplectic form which %admits a everywhere defined Liouville vector field, pointing outwards along both boundary components. This can %then be enlarged to a complete finite type Liouville manifold structure on $C \times \R$.
Consider the product $M_1 = (C \times \R) \times \R^2$, where $\R^2$ carries the standard symplectic structure. On the other hand, let $M_2 = T^*C$ be the cotangent bundle, again with the standard symplectic structure.

\begin{lemma}
$M_1$ and $M_2$ are diffeomorphic, and the diffeomorphism can be chosen to be compatible with the homotopy classes of almost complex structures associated to the given symplectic forms.
\end{lemma}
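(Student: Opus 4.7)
The plan is to realize $M_1$ and $M_2$ as diffeomorphic copies of a common smooth model $C \times \R^3$, reduce the matching of homotopy classes of almost complex structures to a comparison of first Chern classes via obstruction theory, and finally verify that both Chern classes vanish. For the smooth identification: every closed oriented $3$-manifold is parallelizable (Stiefel), so $TC$ is trivial, whence $T^*C \iso C \times \R^3$ as smooth manifolds, agreeing with $M_1 = (C \times \R) \times \R^2 \iso C \times \R^3$.

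Under this common identification $TM_k$ is a trivial oriented real rank-$6$ bundle, so compatible almost complex structures correspond to maps $M_k \to SO(6)/U(3)$. I would identify $SO(6)/U(3) \iso \mathbb{CP}^3$ via the stabilizer-of-a-line description $SU(4)/U(3) \iso \mathbb{CP}^3$ together with $-I \in U(3) \subset SU(4)$ and $SO(6) = SU(4)/\{\pm I\}$ (alternatively, one checks $\pi_3(SO(6)/U(3)) = 0$ directly from the long exact sequence of the fibration $U(3) \to SO(6) \to SO(6)/U(3)$). Since $\pi_k(\mathbb{CP}^3) = 0$ for $3 \leq k \leq 6$ and $M_k$ has the homotopy type of the $3$-dimensional complex $C$, obstruction theory then provides a bijection $[M_k, \mathbb{CP}^3] \iso H^2(M_k;\Z)$ realized by the first Chern class.

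It therefore suffices to verify that $c_1$ vanishes for both structures. For $M_2 = T^*C$: a Riemannian metric on $C$ splits $T(T^*C) \iso \pi^*(TC \oplus T^*C)$ and identifies it, as a complex bundle, with $\pi^*(TC \otimes_\R \C)$, which is trivial because $TC$ is. For $M_1 = (C \times \R) \times \R^2$: the $\R^2$ factor contributes nothing to $H^2$, and McDuff's theorem identifies $C \times \R$ symplectomorphically with $T^*S \setminus S$, so $c_1$ is restricted from $c_1(T(T^*S)) = c_1(TS) + c_1(T^*S) = (2-2g) + (2g-2) = 0$. Consequently, for any diffeomorphism $\phi: M_1 \to M_2$ one has $c_1(\phi^* J_2) = \phi^* c_1(J_2) = 0 = c_1(J_1)$, and by the classification above $J_1$ and $\phi^* J_2$ are homotopic as almost complex structures. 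The main technical point is the reduction of the classification to $c_1$, which rests on the identification $SO(6)/U(3) \iso \mathbb{CP}^3$ (or, equivalently, on the vanishing of higher obstruction groups for maps from a $3$-complex); once that is in place, the remaining Chern class computations are straightforward, with McDuff's identification of $C \times \R$ as $T^*S \setminus S$ being what makes $c_1(M_1) = 0$ visible.
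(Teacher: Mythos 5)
Your argument is essentially the paper's: triviality of $TC$ (hence of $TM_k$) reduces the comparison to maps $M_k \to SO(6)/U(3) \iso \C P^3$, obstruction theory over the $3$-complex $C$ reduces that to $c_1$, and both first Chern classes vanish. You supply somewhat more detail (the $SU(4)/U(3)$ identification, the explicit $c_1$ computations), and the only slight imprecision is calling McDuff's form on $C\times\R$ the restriction of the canonical form on $T^*S$ — it is not, though its first Chern class agrees with that restriction, which is all that is needed and is exactly what the paper asserts.
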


\begin{proof}
The first statement is clear since $C$ is an oriented three-manifold, hence has trivial tangent bundle.
For the same reason, the tangent bundle of $M_k$ is trivial, hence almost complex structures on it correspond to maps $M_k \rightarrow SO(6)/U(3) \iso \C P^3$. Since $M_k$ is homotopy equivalent to a $3$-dimensional cell complex, the only obstructions to constructing a nullhomotopy for such a map lie in $H^2(M_k;\pi_2(\C P^3))$, and are again detected by the first Chern class. In McDuff's example, the first Chern class is the same as for the restriction of the standard symplectic form on $T^*S$, hence zero. The same is true for $T^*C$ since $C$ is an oriented manifold.
\end{proof}

Note that $M_2$ contains a non-displaceable closed Lagrangian submanifold, the zero-section, whereas $M_1$ doesn't. Hence, the two manifolds are not symplectically isomorphic.
%I do not know whether the symplectic structure on $M_1$ comes from a Stein, or even Weinstein, structure.

\section{An eight-dimensional example\label{sec:8}}

Take a nontrivial fibered knot $K \subset S^3$. The product $E = \R \times (S^3 \setminus K)$ carries a symplectic structure of Liouville type. In fact, there is a symplectic fibration $E \rightarrow \R \times S^1$ whose fibre is the interior of the Seifert surface of $K$.

\begin{lemma}
$SH^*(E) \neq 0$.
\end{lemma}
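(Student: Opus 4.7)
The strategy is to exploit the symplectic fibration $\pi: E \to \R \times S^1$ with fibre $F$ (the punctured Seifert surface) and monodromy $\phi: F \to F$ in order to compute $SH^*(E)$ via a Hamiltonian Floer calculation adapted to the fibration.

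I would take a Hamiltonian $H = \pi^* H_b + H_f$, where $H_b: \R \times S^1 \to \R$ is a standard linear-at-infinity Hamiltonian on $T^*S^1$ of non-resonant slope $\lambda > 0$, and $H_f: E \to \R$ is fibrewise a $C^2$-small Morse function on a compact core of each fibre, growing linearly with slope $\mu > 0$ toward the puncture of $F$. The 1-periodic orbits of $H$ are then filtered by their winding number $k \in \Z_{\geq 0}$ around the $S^1$-factor of the base. Winding-$0$ orbits project to constants in $\R \times S^1$; they reduce by a Morse-Bott argument along the zero section $S^1 \subset \R \times S^1$ to the critical points of an effective Morse function on $E$, and so give a subcomplex whose cohomology is $H^*(E;\K)$ (the monodromy $\phi$ enters through the Wang-type comparison between the fibrewise Morse cohomology and $H^*(E)$). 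Winding-$k$ orbits for $k \geq 1$ wrap $k$ times around the base circle and can only close up at fixed points of $\phi^k$, contributing pieces related to the fixed-point Floer cohomology $HF^*(\phi^k)$, shifted appropriately.

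This yields a winding-filtered spectral sequence converging to $SH^*(E)$ with $E_1$-page $H^*(E;\K) \oplus \bigoplus_{k \geq 1} HF^*(\phi^k)$. The unit class $1 \in H^0(E) \subset H^*(E;\K)$ sits at bidegree $(0,0)$; since every generator has non-negative winding, no higher differential $d_r$ can have its image in bidegree $(0,0)$ (that would require a source of strictly negative winding). Hence $1$ is not a boundary, and represents a nonzero element of $SH^0(E)$, so $SH^*(E) \neq 0$.

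The main obstacle is formalizing the winding filtration within the direct-limit definition of $SH^*(E)$: one must verify that both the Floer differential and the continuation maps used to let $\lambda,\mu \to \infty$ respect (or strictly increase) the filtration index, and check that the identification of the winding-$0$ subcomplex with a Morse model for $H^*(E)$ holds uniformly in the limit. This is of the same flavor as the spectral-sequence computations of $SH^*$ for open-book and Lefschetz-type fibrations appearing in work of McLean and Cieliebak-Oancea, whose framework I would invoke.
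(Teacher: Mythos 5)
Your approach is genuinely different from the paper's (which exhibits the incompressible exact Lagrangian torus $\{0\}\times T$ and quotes the standard argument that such a torus forces the unit of $SH^*(E)$ to be nonzero), but it has a gap at the decisive step. First, the ``winding filtration'' is really a direct sum decomposition: a Floer cylinder is a free homotopy between its asymptotic orbits, so the differential and the continuation maps preserve the winding number exactly (indeed they preserve the full free homotopy class in $E$). There is no interaction between different winding numbers, and the survival of the unit is a question entirely internal to the winding-$0$ summand. Second, that summand is not a Morse model for $H^*(E;\K)$: once the fibrewise slope $\mu$ is large, $H_f$ has non-constant $1$-periodic orbits encircling the puncture of each fibre $F$ (and their multiples, which proliferate as $\mu \to \infty$). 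These project to constants in $\R\times S^1$, hence have winding $0$, and they are precisely the generators that could kill the unit. Your bookkeeping (``a source would need strictly negative winding'') does not exclude them, and your $E_1$-page has no slot for them.

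The clearest sign that something is missing is that your argument never uses the nontriviality of $K$. For the unknot the fibre is a disc and the monodromy is trivial, so $E \iso T^*S^1 \times \C$ as a Liouville manifold and $SH^*(E) = 0$ by the K\"unneth formula, since $SH^*(\C)=0$; concretely the unit is killed there by exactly the winding-$0$ fibre orbits described above. So any correct proof must use $K \neq \mathrm{unknot}$, and the natural repair brings you back to the paper's input: the non-constant winding-$0$ orbits represent powers of the longitude of $K$, i.e.\ conjugacy classes in the image of $\pi_1(T) \to \pi_1(S^3\setminus K)$, which are nontrivial for a nontrivial knot by Dehn's Lemma; since Floer cylinders preserve free homotopy classes, such orbits cannot cancel against constant ones, and the unit survives in the contractible summand. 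With that step added (plus some care with orbits near the corner at infinity that wind both around the base and around the puncture, and with the sign of the winding, which can be negative on $T^*S^1$), your fibration computation can likely be made to work --- but the incompressibility of $T$ is exactly what the paper's Lagrangian torus argument packages more efficiently.
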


\begin{proof}
Let $T \subset (S^3 \setminus K)$ be the boundary of a tubular neighbourhood of $K$. Since our knot is nontrivial, the map $\pi_1(T) \rightarrow \pi_1(S^3 \setminus K)$ is injective (by Dehn's Lemma). One can arrange easily that $\{0\} \times T$ is a Lagrangian torus in $E$. There can be no pseudo-holomorphic discs with boundary on that torus, for any almost complex structure compatible with the given symplectic form. By an argument outlined in \cite[Section 5]{seidel07}, the existence of such a torus implies that the image of the identity under the canonical map $H^*(E;\K) \rightarrow SH^*(E)$ is nonzero.
\end{proof}

Take a meridian of $K$. We can think of it as lying inside the boundary at infinity $\partial E$. Choose a Legendrian knot in the same free homotopy class, which always exists by the h-principle, and attach a Weinstein handle \cite{weinstein91} to it. This yields another symplectic manifold of Liouville type, denoted by $X$. Finally, set $M = X \times X$.

\begin{lemma}
$M$ is diffeomorphic to $\R^8$.
\end{lemma}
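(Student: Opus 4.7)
The plan is to invoke Stallings' characterization of Euclidean space: an open smooth manifold of dimension $n \geq 5$ is diffeomorphic to $\R^n$ precisely when it is contractible and simply connected at infinity. Since $\dim M = 8$, it suffices to verify these two properties for $M = X \times X$.

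First I will show that $X$ itself is contractible, from which contractibility of $X \times X$ is immediate. Smoothly, $X$ is obtained from $E \simeq S^3 \setminus K$ by attaching a standard $2$-handle along a circle $\gamma \subset \partial_\infty E$ whose free homotopy class is that of the meridian $m$. Hence
\[
\pi_1(X) = \pi_1(S^3\setminus K)/\langle\langle m \rangle\rangle = 1,
\]
the final equality being the classical fact that the meridian normally generates the knot group (trivial Dehn filling recovers $S^3$). A cellular computation---using that $m$ generates $H_1(S^3\setminus K) \iso \Z$, so the attached $2$-cell kills $H_1$ without creating new $H_2$---shows $H_*(X) \iso H_*(\mathrm{pt})$, and Whitehead's theorem then yields $X \simeq \mathrm{pt}$. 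The framing of the $2$-handle is irrelevant for this step.

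Next I will verify that $X \times X$ is simply connected at infinity. The Liouville structure presents $X$ near infinity as $\partial_\infty X \times [R,\infty)$ for a closed connected contact $3$-manifold $\partial_\infty X$, so I can exhaust $X$ by compact sets $K_n$ with $X \setminus K_n$ connected. Any compact subset of $X \times X$ lies in some $K_n \times K_n$, and I apply Seifert--van Kampen to the open cover of $(X \times X) \setminus (K_n \times K_n)$ by $U = (X \setminus K_n) \times X$ and $V = X \times (X \setminus K_n)$, whose intersection is $(X \setminus K_n) \times (X \setminus K_n)$. Because $\pi_1(X) = 1$, the inclusion-induced maps from $\pi_1(U \cap V) = \pi_1(X\setminus K_n) \times \pi_1(X\setminus K_n)$ into $\pi_1(U) \iso \pi_1(X\setminus K_n)$ and $\pi_1(V) \iso \pi_1(X\setminus K_n)$ are projection onto the first and second factor, respectively. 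The amalgamation then imposes $a = b$ in the free product $\pi_1(X\setminus K_n) * \pi_1(X\setminus K_n)$ for every $(a,b)$; setting $b = e$ kills the first factor and setting $a = e$ kills the second, so the amalgamated product is trivial.

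By Stallings, $X \times X$ is then PL-homeomorphic to $\R^8$, and uniqueness of smooth structures on $\R^n$ for $n \neq 4$ upgrades this to a diffeomorphism. The main subtlety is the fundamental-group calculation at infinity, which depends on $X$ having one end (so that $X \setminus K_n$ is eventually connected) and on identifying the two gluing maps as the factor projections; both in turn reduce to the contractibility of $X$ established in the first step.
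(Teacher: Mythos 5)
Your proof is correct and follows essentially the same route as the paper: contractibility of $X$ (via the meridian normally generating the knot group plus a homology count), hence of $M = X \times X$, together with simple connectivity at infinity, and then the Stallings/h-cobordism recognition of $\R^8$. The only substantive difference is that you spell out the van Kampen argument for simple connectivity at infinity of the product, a point the paper merely asserts.
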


\begin{proof}
Since the meridian normally generates $\pi_1(S^3 \setminus K)$, $X$ is simply-connected. On the other hand, it is homotopy equivalent to a two-dimensional cell complex, and has Euler characteristic $1$. Hence, $X$ is contractible, and so is $M$. Moreover, $M$ is simply-connected at infinity, so the h-cobordism theorem concludes the argument.
\end{proof}

\begin{lemma}
$SH^*(M) \neq 0$.
\end{lemma}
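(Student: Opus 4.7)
The plan is to reduce this to the previous two lemmas, via Viterbo functoriality and Oancea's K\"unneth formula.

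First I would establish that $SH^*(X) \neq 0$ by showing that the unit element is nonzero. By construction, $X$ is obtained from $E$ (viewed as a compact Liouville domain) by attaching a Weinstein handle along a Legendrian in $\partial E$. Hence $E$ sits inside $X$ as a Liouville subdomain, and Viterbo's transfer map supplies a unital ring homomorphism $SH^*(X) \rightarrow SH^*(E)$. The preceding lemma shows that $1 \in SH^*(E)$ is nonzero, being precisely the image of $1 \in H^*(E;\K)$ under the canonical map. Since a ring homomorphism must preserve the unit, $1 \in SH^*(X)$ is also nonzero, and in particular $SH^*(X) \neq 0$.

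Second, I would apply Oancea's K\"unneth formula to pass from $X$ to $M = X \times X$. Because $X$ is simply connected (as established in the diffeomorphism lemma), every free loop in $X$, and hence in $M$, is nullhomotopic. Therefore $SH^*(X) = SH^*(X)_0$ and $SH^*(M) = SH^*(M)_0$. The K\"unneth identification then gives
\[
SH^*(M) \iso SH^*(X) \otimes SH^*(X),
\]
which contains the nonzero element $1 \otimes 1$.

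The main point to verify is that Viterbo's transfer is available as a unital ring map in this setting. The Liouville subdomain inclusion $E \hookrightarrow X$ is automatic from the handle attachment construction, and the ring-theoretic properties of the transfer map (in particular, unitality and compatibility with the canonical maps from ordinary cohomology) are standard, so this step is really just a matter of invoking the appropriate reference.
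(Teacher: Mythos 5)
Your proposal is correct and takes essentially the same route as the paper: the paper deduces $SH^*(X) \neq 0$ from Viterbo functoriality, phrased via the commutative square relating the canonical maps $H^*(\cdot;\K) \rightarrow SH^*(\cdot)$ to the restriction/transfer maps rather than via unitality of the transfer as a ring homomorphism, but this is the same point. It then likewise invokes Oancea's K\"unneth formula to pass from $X$ to $M = X \times X$.
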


\begin{proof}
Since $X$ is constructed by handle-attachment from $E$, there is an exact symplectic embedding $E \hookrightarrow X$. We therefore have the Viterbo functoriality \cite{viterbo97a} map, which fits into a commutative diagram
\[
\xymatrix{
H^*(X;\K) \ar[d] \ar[r] & \ar[d] SH^*(X) \\
H^*(E;\K) \ar[r] & SH^*(E).
}
\]
Since the restriction map in the left hand column maps the identity to the identity, our previous observation implies that $SH^*(X) \neq 0$. Again, the K\"unneth formula \cite{oancea04} allows us to carry over the result to $M$.
\end{proof}

Therefore, $M$ is not symplectically isomorphic to standard $\R^8$.
%We now see that $M$ is diffeomorphic, but not symplectically isomorphic, to standard $\R^8$. Obviously, there %is a wide scope of choices and possible variations of this construction, and it is interesting to ask how they %influence the outcome. For instance, the growth rate \cite{seidel07} of $E$ can distinguish knots with %pseudo-Anosov monodromy from ones with reducible monodromy (algebraic knots belong to the latter class). This %is related to work in progress by McLean.

%\bibliographystyle{plain}
%\bibliography{../../../bib/all,../../../bib/new}

\end{document}